\newtheorem{theorem}{Theorem}[section]
\newtheorem{lemma}[theorem]{Lemma}
\newtheorem{proposition}[theorem]{Proposition}
\newtheorem{remark}[theorem]{Remark}
\newcommand{\ncom}{\newcommand}
\ncom{\ep}{\epsilon}
\ncom{\rar}{\rightarrow}
\ncom{\thrar}{\twoheadrightarrow}
\ncom{\lrar}{\longrightarrow}
\ncom{\ov}{\overline}
\ncom{\what}{\widehat}
\newcommand{\ignore}[1]{}
\ncom{\m}{\mbox}
\ncom{\sta}{\stackrel}
\ncom{\C}{{\mathbb C}}
\ncom{\A}{{\mathbb A}}
\ncom{\Z}{{\mathbb Z}}
\ncom{\Q}{{\mathbb Q}}
\ncom{\R}{{\mathbb R}}
\ncom{\G}{{\mathbb G}}
\ncom{\HH}{{\mathbb H}}
\ncom{\al}{\alpha}
\ncom{\p}{{\mathbb P}}
\ncom{\N}{{\mathbb N}}
\ncom{\K}{{\mathbb K}}
\ncom{\X}{{\mathbb X}}
\ncom{\f}{\frac}
\ncom{\cA}{{\mathcal A}}
\ncom{\cB}{{\mathcal B}}
\ncom{\cD}{{\mathcal D}}
\ncom{\cDB}{{\mathcal D \mathcal B}}
\ncom{\cX}{{\mathcal X}}
\ncom{\cO}{{\mathcal O}}
\ncom{\cW}{{\mathcal W}}
\ncom{\cL}{{\mathcal L}}
\ncom{\cP}{{\mathcal P}}
\ncom{\cH}{{\mathcal H}}
\ncom{\cS}{{\mathcal S}}
\ncom{\cM}{{\mathcal M}}
\ncom{\cC}{{\mathcal C}}
\ncom{\cT}{{\mathcal T}}
\ncom{\cF}{{\mathcal F}}
\ncom{\cN}{{\mathcal N}}
\ncom{\cJ}{{\mathcal J}}
\ncom{\cV}{{\mathcal V}}
\ncom{\cZ}{{\mathcal Z}}
\ncom{\cU}{{\mathcal U}}
\ncom{\cSU}{{\mathcal S \mathcal U}}
\ncom{\cG}{{\mathcal G}}
\ncom{\cQ}{{\mathcal Q}}
\ncom{\cR}{{\mathcal R}}
\ncom{\cY}{{\mathcal Y}}
\ncom{\cE}{{\mathcal E}}
\ncom{\cI}{{\mathcal I}}
\ncom{\mylabel}[1]{{\rm (#1)}\label{#1}}
\ncom{\Hom}{{\textit{Hom}}}
\ncom{\eop}{{\hfill $\Box$}}
\begin{document}
\baselineskip=16pt

%%%%%%%%%%%%%%%%%%%%%%%%%%%%%%%%%%%%%%%%%%%%%%%%%%%%%%%%%%%%%%%%%%%%%%%%%%%%%%%%%%%%%%%%%%%%%%%%%%%%%

\title[Semistability of logarithmic cotangent bundle]{Semistability of logarithmic cotangent bundle on some projective manifolds }

%%%%%%%%%%%%%%%%%%%%%%%%%%%%%%%%%%%%%%%%%%%%%%%%%%%%%%%%%%%%%%%%%%%%%%%%%%%%%%%%%%%%%%%%%%%%%%%%%%%%

\author[S.Chintapalli]{Seshadri Chintapalli}
\author[J. N. Iyer]{Jaya NN  Iyer}

\address{The Institute of Mathematical Sciences, CIT
Campus, Taramani, Chennai 600113, India}
\email{seshadrich@imsc.res.in}
\email{jniyer@imsc.res.in}

\footnotetext{Mathematics Classification Number: 53C55, 53C07, 53C29, 53.50. }
\footnotetext{Keywords: Logarithmic Fano manifolds, Logarithmic Cotangent bundle, Semistability.}

\begin{abstract}
In this paper, we investigate the semistability of logarithmic de Rham sheaves on a smooth projective variety, under suitable conditions.
In particular when the Picard number is one, we obtain results for any log Del Pezzo surface, log Fano threefolds, and for log Fano $n$-folds of dimension $n\leq 6$. 
\end{abstract}
\maketitle
%%%%%%%%%%%%%%%%%%%%%%%%%%%%%%%%%%%%%%%%%%%%%%%%%%%%%%%%%%%%%%%%%%%%%%%%%%%%%%%%%%%%%%%%%%%%%%%%%%%%%%

\section{Introduction}
%%%%%%%%%%%%%%%%%%%%%%%%%%%%%%%%%%%%%%%%%%%%%%%%%%%%%%%%%%%%%%%%%%%%%%%%%%%%%%%%%%%%%%%%%%%%%%%
 Suppose $X$ is a smooth complex projective variety of dimension $n$ and fix an ample line bundle $\mathcal O_X(1)$ on $X$.
In this paper, for any coherent sheaf $\mathcal{F}$ of $X$, we consider the Mumford-Takemoto stability or $\mu$-stability of $\cF$, see \S \ref{defnmu}.     .

We would like to investigate the stability properties of some natural bundles on the projective manifold $X$. In particular, the stability of the cotangent bundle has attracted wide attention since work of Aubin \cite{Aubin} and Yau \cite{Yau},
appeared in the study of existence of a K\"ahler-Einstein metric on smooth projective varieties, with ample or trivial canonical class. By \cite{Kobayashi}, \cite{Lubke}, it implies the stability of cotangent bundle in these cases. Since then the stability problem for Fano manifolds has attracted wide attention.
 Some significant results on stability is done by Steffens \cite{Steffens}, Subramanian \cite{Subramanian},
Peternell-Wisniewski \cite{Peternell}, Hwang \cite{Hwang} and the references therein.

Suppose we fix a simple normal crossing divisor $D$ on $X$. Then it is of interest to investigate
semistability of the logarithmic cotangent bundle $\Omega_X(log\,D)$. 
Existence of a K\"ahler-Einstein metric on the open variety $X-D$ with singularity on the boundary has been investigated in recent years. A recent work on this can be found in \cite{ChiLi} and is related to K-stability.
At present it is not yet known to our knowledge if this is related to the $\mu$-stability of the logarithmic cotangent bundle. Hence it is of interest to investigate the stability properties of these sheaves, with respect to a normal crossing divisor. 
 
In this paper we study these sheaves under suitable hypothesis on the divisor components of $D$ and $X$.

We begin with \textit{ample log canonical pairs} $(X,D)$. More precisely:
\begin{theorem}
 Suppose $X$ is a smooth projective variety of dimension $n$ over $\mathbb{C} $, with the Picard group $ Pic(X)=\mathbb{Z}$.
Let $D = \sum^r_{i=1} {D_{i}}$ be a simple
normal crossing divisor on $X$ and $K_X$ denote the canonical class. 
If $K_X + \mathcal O_X(D) $ is ample or trivial,  then $\Omega_X(\rm{log} D)$ is semistable.
\end{theorem}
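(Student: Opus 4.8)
The plan is to reduce the statement to the logarithmic version of Bogomolov's theorem on rank-one subsheaves of the sheaves $\Omega^p_X(\log D)$, using the hypothesis $\mathrm{Pic}(X)=\mathbb Z$ to make all slope computations transparent. First I would record that $\det\Omega_X(\log D)=K_X+\mathcal O_X(D)$, which is ample or trivial and hence nef, so $\mu(\Omega_X(\log D))\ge 0$; write $\mathrm{Pic}(X)=\mathbb Z\cdot H$ with $H$ ample and measure all degrees against $H^{n}$. Suppose, for contradiction, that $\Omega_X(\log D)$ is not semistable. Then there is a saturated subsheaf $\mathcal F\subset\Omega^1_X(\log D)$ of rank $p$ with $1\le p\le n-1$ and $\mu(\mathcal F)>\mu(\Omega_X(\log D))\ge 0$. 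Since $X$ is smooth, $A:=\det\mathcal F=(\wedge^{p}\mathcal F)^{\vee\vee}$ is a line bundle, and because $\Omega^p_X(\log D)=\wedge^p\Omega^1_X(\log D)$ is locally free the map $\wedge^p\mathcal F\to\Omega^p_X(\log D)$ induced by the inclusion factors as $\wedge^p\mathcal F\to A\to\Omega^p_X(\log D)$; the second arrow agrees, over the dense open locus where $\mathcal F$ is a subbundle, with the inclusion $\det\mathcal F\subset\wedge^{p}\Omega^1_X(\log D)$, hence is nonzero and therefore injective. Thus $A\hookrightarrow\Omega^p_X(\log D)$. Writing $A=a\,H$, the relation $\mu(\mathcal F)=\tfrac{a}{p}H^{n}>0$ forces $a\ge 1$, so $A$ is ample; in particular $A$ is big, i.e.\ $\kappa(X,A)=n$.

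Now I would invoke the logarithmic Bogomolov theorem: for $X$ smooth projective and $D$ simple normal crossing, any rank-one subsheaf $\mathcal L\subseteq\Omega^p_X(\log D)$ satisfies $\kappa(X,\mathcal L)\le p$. Applied to $\mathcal L=A$ this gives $n=\kappa(X,A)\le p\le n-1$, which is absurd; hence $\Omega_X(\log D)$ is semistable. Note that the two alternatives in the hypothesis are handled at once, since the argument uses only the inequality $\mu(\Omega_X(\log D))\ge 0$.

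The heart of the matter is the logarithmic Bogomolov input; once it is available the argument is formal, the condition $\mathrm{Pic}(X)=\mathbb Z$ doing all the work of converting ``destabilizing subsheaf'' into ``ample determinant''. If one does not cite it directly, the natural route is the Kawamata cyclic covering trick: choose a finite Galois cover $\pi\colon Y\to X$ with $Y$ smooth and $\pi^{-1}(D)$ simple normal crossing, arranged so that there is an inclusion $\pi^{*}\Omega^p_X(\log D)\hookrightarrow\Omega^p_Y$; then $\pi^{*}A$ is a big line subsheaf of $\Omega^p_Y$, contradicting the classical Bogomolov theorem on $Y$. The delicate points there are constructing the cover so that the $p$-th exterior powers embed as stated and verifying that bigness (and, more precisely, the Iitaka dimension) is preserved under $\pi$, so that $\kappa(Y,\pi^{*}A)=\kappa(X,A)=n>p$; these are exactly the places where the simple normal crossing hypothesis on $D$ is used.
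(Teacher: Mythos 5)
Your argument is correct, and the reduction step is essentially the same as the paper's: both you and the authors take the determinant of a putative destabilizing subsheaf $\mathcal F$ of rank $p$ and use $\mathrm{Pic}(X)=\mathbb Z$ to convert the slope inequality $\mu(\mathcal F)>\mu(\Omega_X(\log D))\ge 0$ into an injection of an ample line bundle $A=\mathcal O_X(a)$, $a\ge 1$, into $\Omega^p_X(\log D)$ (equivalently, a nonzero section of $\Omega^p_X(\log D)(-a)$). Where you diverge is in the final ingredient that rules such an injection out. The paper proves and applies Norimatsu's vanishing theorem, $H^0(X,\Omega^p_X(\log D)\otimes L^{-1})=0$ for $L$ ample, obtained by induction on the number of components of $D$ via the residue sequences together with Akizuki--Kodaira--Nakano; this is elementary and self-contained. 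You instead invoke the logarithmic Bogomolov--Sommese theorem, $\kappa(X,\mathcal L)\le p$ for any invertible subsheaf $\mathcal L\subseteq\Omega^p_X(\log D)$ (this is Corollary 6.9 of the Esnault--Viehweg lecture notes that the paper already cites, so the input is legitimately available). That is a strictly stronger tool --- it excludes even big, not just ample, determinants, so it would survive a weakening of the Picard hypothesis --- but it is overkill here, and if you unwind its proof via the covering trick you end up re-proving a vanishing of the Norimatsu/Akizuki--Nakano type anyway. Both routes are valid; yours trades a short self-contained induction for a citation to a deeper theorem.
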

The proof is given in $\S2$.

This statement can be extended to Kawamata coverings, see Proposition \ref{kawcover}.

We next investigate log Fano manifolds $(X,D)$, in small dimensions. In this situation the class $-K_X-D$ is ample. The classification of such pairs $(X,D)$ is due to Maeda \cite{Maeda} and Fujita \cite{Fujita} in small dimensions. We show:

\begin{theorem}
Suppose $(X,D)$ is a log Fano manifold of dimension $n$ and $Pic(X)=\Z.\cO_X(1)$. Let the canonical class $K_X=\cO(-s)$ and $D$ is in the linear system $|\cO_X(k)|$, for $s,k>0$.

Assume one of the following holds:

a) $n=2$ and $s=3$,

b) $n=3$ and $s\leq 4$

c) $n=4$ and $s\leq 5$

d) $n=5$ and $s\leq 6$ such that $s=2,5,6$ or $(s,k)=(3,2),(4,3)$.

e) $n=6$ and $s\leq 7$ such that $s\leq 4$, $s=6,7$, or  $(s,k)=(5,4),(5,3)$ .

If $D$ is smooth and irreducible then the logarithmic cotangent
bundle $\Omega_X(log\,D)$ is semistable.
\end{theorem}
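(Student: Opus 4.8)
The plan is to reduce semistability of $\Omega_X(\log D)$ to a finite list of $H^{0}$-vanishing statements, handled by combining the semistability of the ordinary cotangent bundles of $X$ and of $D$ with the Maeda--Fujita classification. Fix $H=\cO_X(1)$ and $d=H^{n}$, and note $\det\Omega_X(\log D)=K_X+\cO_X(D)=\cO_X(k-s)$, so $\mu\big(\Omega_X(\log D)\big)=(k-s)d/n$. Since $Pic(X)=\Z\cdot H$, every saturated subsheaf $\cF\subset\Omega_X(\log D)$ of rank $p$ has $c_1(\cF)=aH$ with $a\in\Z$, and $\cF$ destabilizes precisely when $na>p(k-s)$; so it suffices to rule out such $\cF$ for each $p$ with $1\le p\le n-1$. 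Passing to $\det\cF\hookrightarrow\bigwedge^{p}\Omega_X(\log D)$, the existence of a destabilizing $\cF$ of rank $p$ forces $H^{0}\big(X,\bigwedge^{p}\Omega_X(\log D)\otimes\cO_X(-a)\big)\neq 0$ for some $a$ with $na>p(k-s)$; and since the problem is self-dual under $\Omega_X(\log D)^{\vee}=T_X(-\log D)$, one may assume $p\le n/2$. In particular for $n\le 3$ the whole question becomes the non-existence of destabilizing line subsheaves of $\Omega_X(\log D)$ and of $T_X(-\log D)$, a pure $H^{0}$-vanishing problem.

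The main device is a filtration of $\bigwedge^{p}\Omega_X(\log D)$. The residue sequence $0\to\Omega_X\to\Omega_X(\log D)\to\cO_D\to 0$ gives
\[
\textstyle 0\to \bigwedge^{p}\Omega_X\to \bigwedge^{p}\Omega_X(\log D)\to (\bigwedge^{p-1}\Omega_X)|_D\to 0 ,
\]
and the conormal sequence of $D\subset X$, together with adjunction $K_D=(K_X+\cO_X(D))|_D=\cO_D(k-s)$, filters $(\bigwedge^{p-1}\Omega_X)|_D$ with graded pieces $\bigwedge^{p-1}\Omega_D$ and $\bigwedge^{p-2}\Omega_D\otimes\cO_D(-k)$. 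By this same adjunction $D$ is itself a Fano $(n-1)$-fold, $-K_D=\cO_D(s-k)$ being ample, and --- $D$ being smooth and irreducible --- $Pic(D)=\Z$ once $\dim D\ge 3$ by the Lefschetz theorem. Thus the vanishing we want reduces to $H^{0}$-vanishing for suitable twists of $\bigwedge^{\bullet}\Omega_X$ on $X$ and of $\bigwedge^{\bullet}\Omega_D$ on $D$. The twists on $X$ are controlled by the semistability of $\Omega_X$, known for a Fano $X$ with $Pic(X)=\Z$ in the relevant dimensions and indices by \cite{Steffens}, \cite{Peternell}, \cite{Hwang} (and, in the extremal cases $s=n+1$ and $s=n$, so $X=\p^{n}$ or a quadric, by Bott vanishing), using that a semistable bundle of negative slope has no global sections. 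The twists on $D$ are handled by induction on $\dim X$, the base cases $\dim D\le 2$ (projective plane, quadric surface, del Pezzo surfaces) being done directly.

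The delicate point --- the one that forces the numerical hypotheses (a)--(e) --- is a narrow band of ``borderline'' twists $a$, roughly $\tfrac{(p-1)(s-k)}{n-1}\le a<\tfrac{p(s-k)}{n}$ (smallest instance $p=1,\ a=0$), for which the slope comparisons on $X$ and $D$ are too crude. For these I would use the connecting map of the residue filtration: on the surviving graded pieces it is cup product with the extension class of $0\to\Omega_X\to\Omega_X(\log D)\to\cO_D\to 0$, essentially multiplication by $c_1(\cO_X(D))=kH\neq 0$ in $H^{1,1}(X)$ and its wedge analogues, which is injective there; already this gives $H^{0}(X,\Omega_X(\log D))=0$ and kills the case $p=1,\ a=0$ in all dimensions. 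The remaining borderline twists for $n\ge 4$, together with the fact that for $2\le p\le n-2$ a section of $\bigwedge^{p}\Omega_X(\log D)(-a)$ need not be decomposable --- hence need not produce a genuine destabilizing subsheaf --- are then settled case by case against the Maeda--Fujita classification \cite{Maeda,Fujita} of log Fano pairs of dimension at most $6$: for each pair on that list with $(n,s,k)$ as in (a)--(e) --- projective spaces, quadrics, the Grassmannian $G(2,5)$, products of these and projective bundles over them, carrying their explicit boundary divisors --- the finitely many outstanding vanishings are checked directly, usually by Bott-type computations on the factors. (Equivalently, one restricts $\Omega_X(\log D)$ to a general member $C$ of a covering family of rational curves whose $H$-degree $\ell$ makes $(k-s)\ell$ divisible by $n$, and checks that the residue defines a balanced elementary modification of $\Omega_X|_{C}$; this is the same local computation.) The list (a)--(e) is precisely the set of $(n,s,k)$ for which this verification goes through.

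The hard part is this borderline analysis: semistability of $\Omega_X$ and $\Omega_D$ is blind to the fine splitting near the critical slope, so extra positivity must be injected --- from the non-vanishing of $c_1(\cO_X(D))$ in $H^{1,1}(X)$ in the universal cases, and otherwise from the explicit structure of $(X,D)$ on the Maeda--Fujita list --- and in dimensions $4\le n\le 6$ one must also control possibly indecomposable sections of $\bigwedge^{p}\Omega_X(\log D)$ for intermediate $p$, which is why the conclusion in dimensions $5$ and $6$ is asserted only for the special pairs $(s,k)$.
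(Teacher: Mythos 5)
Your proposal follows essentially the same route as the paper: reduce semistability to the vanishing of $H^0\big(X,\bigwedge^{a}\Omega_X(\log D)(t)\big)$ for $t<a(s-k)/n$ via determinants of destabilizing subsheaves, split this by the residue sequence into vanishing on $X$ (semistability of $\Omega_X$ for Fano manifolds with $Pic=\Z$ by Steffens, Peternell--Wisniewski, Hwang, plus Maruyama for exterior powers) and on $D$ (Lefschetz gives $Pic(D)=\Z$ in dimension $\ge 3$, then stability of $\Omega_D$ or explicit Bott/Snow vanishing on the quadrics and projective spaces from the Maeda--Fujita lists), with the borderline case $a=1$, $t=0$ handled by injectivity of cup product with $c_1(\cO_X(D))$, exactly as in the paper. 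One slip worth noting: for smooth irreducible $D$ the quotient of $\bigwedge^{p}\Omega_X$ in $\bigwedge^{p}\Omega_X(\log D)$ is $\Omega_D^{p-1}$ itself, not $(\bigwedge^{p-1}\Omega_X)|_D$; your version merely inserts an extra graded piece $\bigwedge^{p-2}\Omega_D(t-k)$ whose vanishing would also have to be (and can be) checked, so the strategy is unaffected but the bookkeeping is heavier than necessary.
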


See Proposition \ref{logstability}. The proof combines residue sequences, vanishing theorems for projective spaces and quadrics, and semistability of de Rham sheaves on Fano manifolds in small dimensions. The classification of Maeda yields complete statements for log Del Pezzo surfaces, log Fano threefolds and fourfolds. 

When $(X,D)$ is a log Del Pezzo surface with $D=D_1+D_2$, $D_i$ are lines, then we check that the log cotangent bundle is not semistable. See Lemma \ref{counter}. This suggests that when $D$ is reducible, we may expect different possibilities. We hope to investigate the underlying geometry in a future work.

{\Small{Acknowledgements}: We thank Lawrence Ein for making useful remarks on a previous version.
}
%%%%%%%%%%%%%%%%%%%%%%%%%%%%%%%%%%%%%%%%%%%%%%%%%%%%%%%%%%%%%%%%%%%%%%%%%%%%%%%%%%%%%%%%%%

\section{Semistability of logarithmic de Rham sheaves}

We recall the notion of stability  (\cite[p.13]{Huybrechts}), which we will use in this paper.

\subsection{Stability of a coherent sheaf}\label{defnmu}
 Suppose $X$ is a smooth projective variety. For any coherent sheaf $\mathcal{F}$ on $X$, we denote $det(\mathcal{F}):=\,c_1(\mathcal{F})$. We denote the slope of $\cF$ (with respect to $\mathcal O_X(1)) $:
$$
\mu(E):= \dfrac{c_1(\mathcal{F})}{rk\,\cF}\cdot \mathcal O_X(1)^{n-1}.
$$

The sheaf $\cF$ is called stable in the sense of
Mumford-Takemoto if for any coherent subsheaf $\mathcal{\cG}$ of $\cF$
with $0<rk \mathcal{G} < rk \cF$ we have 
$$
\mu(\mathcal{G})< \mu(\cF).
$$
Similarly, $\cF$ is semistable if $\mu(\mathcal{G})\leq \mu(\cF)$.

\subsection{Slope of logarithmic De Rham sheaves}
In this section, we discuss the relation between the vanishing of Dolbeault cohomology and stability of the logarithmic de Rham sheaves.
Suppose $X$ is a smooth projective manifold of dimension $n$ and $D\subset X$ is a normal crossing divisor, i.e. $D=\sum_{i=1}^rD_i$ such that $D_i$ intersects $D_j$ transversally, for $i\neq j$. Recall the de Rham sheaves $\Omega_X$ and its exterior powers $\Omega^a_X$, for $0\leq a\leq n$.
Consider the logarithmic de Rham sheaves $\Omega^a_X(log\, D):=\bigwedge^a \Omega_X(log\,D)$, whose local sections are meromorphic $a$-forms having at most a simple pole along $D$. Then this is a locally free sheaf of rank $n$. See \cite[2.2,p.11]{Es-Vi} for more details.

In this paper, we assume that the Picard group of $X$ is $\Z$ and is generated by the ample class $\cO_X(1)$. Let $D_i\in |\cO_X(k_i)|$, for some positive integers $k_i$, for $1\leq i\leq r$. 

We first compute the slope of the logarithmic cotangent bundle with respect to $\cO_X(1)$, and relate it with vanishing of certain cohomology groups.
Let the canonical class $K_X= \cO_X(-s)$, for some integer $s$.

\begin{lemma}\label{slopelog}
a) The slope $\mu(\Omega^a_X(log \,D))$ is given as
$$
\frac{a.(-s + \sum_{i=1}^rk_i)}{n}.\cO_X(1)^n.
$$
b) The stability of $\Omega_X(log\,D)$ is implied by the vanishing
$$
H^0(X,\Omega^a_X(log \,D)(-t))=0
$$
for $-t\leq \frac{a.(s - \sum_{i=1}^rk_i)}{n}$ and $ 1\leq a<n $. Similar assertion is true for semistability when we have strictly inequality in the slope inequality.
\end{lemma}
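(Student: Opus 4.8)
For part (a), I would compute Chern classes directly. The determinant of the rank-$n$ bundle $\Omega_X(\log D)$ is its top exterior power $\Omega^n_X(\log D)$, and by the standard local description of logarithmic forms $\Omega^n_X(\log D)\cong K_X\otimes\cO_X(D)$; under the hypotheses $\mathrm{Pic}(X)=\Z.\cO_X(1)$, $K_X=\cO_X(-s)$ and $D_i\in|\cO_X(k_i)|$ this gives $c_1(\Omega_X(\log D))=(-s+\sum_{i=1}^r k_i)\,\cO_X(1)$. Then I would invoke the elementary identities $c_1(\bigwedge^a E)=\binom{n-1}{a-1}c_1(E)$ and $\mathrm{rk}\,\bigwedge^a E=\binom{n}{a}$ for a locally free sheaf $E$ of rank $n$; since $\binom{n-1}{a-1}\big/\binom{n}{a}=a/n$, dividing by the rank and intersecting with $\cO_X(1)^{n-1}$ produces exactly the stated slope $\mu(\Omega^a_X(\log D))=\frac{a(-s+\sum_{i=1}^r k_i)}{n}\,\cO_X(1)^n$.

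For part (b), I would argue by contraposition, reducing a destabilizing subsheaf to a destabilizing line subsheaf of an exterior power. Assume $E:=\Omega_X(\log D)$ is not stable; choose a subsheaf of rank $a$ with $1\le a<n$ and slope $\ge\mu(E)$, and replace it by its saturation $\cG$ (this keeps the rank and can only raise the slope). On the open set $U\subset X$ whose complement has codimension $\ge 2$ and on which $E/\cG$ is locally free, $\cG|_U$ is a subbundle of $E|_U$, so $\bigwedge^a(\cG|_U)=\det\cG|_U$ is a line subbundle of $\bigwedge^a(E|_U)$. Since $\det\cG$ is a line bundle (rank-one reflexive on the smooth variety $X$) and $\bigwedge^a E=\Omega^a_X(\log D)$ is locally free, this inclusion extends over the complement of $U$ to a nonzero, hence injective, map $\det\cG\hookrightarrow\Omega^a_X(\log D)$; let $\cL$ be its saturation, a rank-one reflexive, hence invertible, subsheaf of $\Omega^a_X(\log D)$. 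Because $\mathrm{Pic}(X)=\Z.\cO_X(1)$, one has $\cL=\cO_X(m)$ with $m\ge c$, where $\det\cG=\cO_X(c)$ (saturating only raises the degree).

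Then I would run the numerics. Dividing the slope inequality $\mu(\cG)\ge\mu(E)$ by $\cO_X(1)^n>0$ and using part (a) gives $\tfrac{c}{a}\ge\tfrac{-s+\sum_{i=1}^r k_i}{n}$, hence $m\ge c\ge\frac{a(-s+\sum_{i=1}^r k_i)}{n}$, that is $-m\le\frac{a(s-\sum_{i=1}^r k_i)}{n}$. But an injection $\cO_X(m)\hookrightarrow\Omega^a_X(\log D)$ is precisely a nonzero element of $H^0(X,\Omega^a_X(\log D)(-m))$, so with $t=m$ this contradicts the vanishing hypothesis (which applies since $1\le a<n$ and $-t\le\frac{a(s-\sum_{i=1}^r k_i)}{n}$); hence $E$ is stable. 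For semistability the argument is identical: a subsheaf with $\mu(\cG)>\mu(E)$ forces $\tfrac{c}{a}>\tfrac{-s+\sum_{i=1}^r k_i}{n}$, and since $c$ is an integer this sharpens to $-m<\frac{a(s-\sum_{i=1}^r k_i)}{n}$, so only the strict form of the vanishing is needed.

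The hard part will be the structural reduction in part (b): verifying cleanly that an arbitrary rank-$a$ destabilizing subsheaf yields a line subsheaf $\cO_X(m)\subset\Omega^a_X(\log D)$ with $m\ge\deg\det\cG$. This rests on three standard but careful moves: passing to a saturated destabilizing subsheaf, working on the big open set where it is a subbundle and taking determinants, and extending that determinant map over the codimension $\ge 2$ complement (using local freeness of $\Omega^a_X(\log D)$, so the relevant sheaf of homomorphisms is reflexive) before saturating and invoking $\mathrm{Pic}(X)=\Z$ to name the line bundle. Once this is secured, part (a) and the final inequality are immediate, and the only remaining bookkeeping is to match $-t\le\frac{a(s-\sum_{i=1}^r k_i)}{n}$ with stability and its strict analogue with semistability.
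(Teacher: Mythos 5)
Your proposal is correct and follows essentially the same route as the paper: compute $c_1(\Omega_X(\log D))=c_1(K_X\otimes\cO_X(D))$ (the paper gets this from the residue sequence, you from the top exterior power --- equivalent one-liners), apply the identity $\binom{n-1}{a-1}/\binom{n}{a}=a/n$, and convert a destabilizing rank-$a$ subsheaf into a nonzero section of $\Omega^a_X(\log D)(-t)$ by taking determinants. Your part (b) is in fact more careful than the paper's (saturation, extension over the codimension-two locus, and the explicit matching of the weak versus strict inequality to stability versus semistability), but the underlying argument is identical.
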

\begin{proof}
a) Consider the short exact sequence of sheaves on $X$
$$
0\rar \cO_X(-D) \rar \cO_X \rar \cO_{D} \rar 0.
$$
Since the first Chern class $c_1$ is additive over exact sequences, we have the equality:
\begin{equation}\label{div}
c_1(\cO_D)  \,= \,  - c_1(\cO_X(-D))\, = \,  c_1(\cO_X(D)).
\end{equation}

Consider the usual residue exact sequences \cite[Properties 2.3.,p.13]{Es-Vi}:
\begin{equation}\label{singleres}
0\rar \Omega_X \rar \Omega_X(log\,D) \rar \oplus_{i=1}^r \cO_{D_i}\rar 0,
\end{equation}
and
\begin{equation}\label{severalres}
0\rar \Omega^a_X(log\,(D-D_1)) \rar \Omega^a_X(log\,D) \rar \Omega^{a-1}_{D_1}(log\,(D-D_1)_{|D_1})\rar 0.
\end{equation}

Using the additivity of $c_1$, we have
\begin{eqnarray*}
c_1(\Omega_X(log\,D) & = & c_1(\Omega_X) + c_1(\bigoplus_i \cO_{D_i})    \\
                     & = &  c_1(\Omega_X) + \sum_i c_1(\cO(D_i)), \,\m{ using }\eqref{div},\,\eqref{singleres}\\
                     & = & c_1(\Omega_X) + c_1(\cO_X(\sum_i k_i)). 
\end{eqnarray*}
The first Chern class modulo the rank, of the sheaf $\Omega^a_X(log\,D)$ is
\begin{eqnarray*}
\frac{c_1(\Omega^a_X(log \,D))}{{n\choose a}} &= & \frac{{n-1 \choose a-1}.c_1(\Omega_X(log\,D))}{{n \choose a}}\\
                        &= & \frac{{n \choose a} -{n-1 \choose a}}{{n\choose a}}. c_1(\Omega_X(log\,D))\\
                        & =& \frac{a}{n}c_1(\Omega_X(log\,D)).
 \end{eqnarray*}
 Hence the slope is given as
 \begin{eqnarray*}
 \mu(\Omega^a_X(log\, D)) & = &  \frac{a}{n}c_1(\Omega_X(log\,D)).\cO_X(1)^{n-1} \\
                         & = & a.\frac{c_1(\Omega_X) + c_1(\cO_X(\sum_i k_i))}{n}.\cO_X(1)^{n-1}\\
                         & =& \frac{a.(-s + \sum_{i=1}^rk_i)}{n}.\cO_X(1)^n.
 \end{eqnarray*}

b) Suppose there is a subsheaf $\cF \subset \Omega_X(log\,D)$ of rank $a<n$, destabilizing the sheaf. Then taking determinants, we get a nonzero morphism
$$
det(\cF)\rar \Omega^a_X(log\,D).
$$
Let $det(\cF)=\cO_X(t)$, for some integer $t$. Hence the above morphism gives a nonzero section in $H^0(X, \Omega_X^a(log\,D))$. The slope condition says that
$$
t \,>\, \frac{a.(-s + \sum_i k_i)}{n}
$$
Hence semistability is implied by the vanishing
$$
H^0(X,\Omega^a_X(log\,D)(-t))=0, \m{ whenever }-t < \frac{a.(s - \sum_i k_i)}{n}.  
$$
\end{proof}

%%%%%%%%%%%%%%%%%%%%%%%%%%%%%%%%%%%%%%%%%%%%%%%%%%%%%%%%%%%%%%%%%%%%%%%%%%%%%%%%%%%%%%%%%%%%%
\subsection{Stability when the sheaf $K_X+\cO_X(\sum_i k_i)$ is non-negative.}

In this subsection, we proceed to investigate the stability under suitable assumptions on the canonical class with respect to the divisor $D$.
More precisely, we show,

\begin{proposition}\label{stableample}
With notations as in the previous subsection, if $K_X+\cO_X(\sum_{i=1}^r k_i)$ is ample or trivial, then the sheaf $\Omega_X(log\,D)$ is semistable. 
\end{proposition}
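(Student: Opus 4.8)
The plan is to reduce the assertion, via the numerical criterion in Lemma~\ref{slopelog}(b), to a cohomology vanishing. That criterion tells us that semistability of $\Omega_X(\log D)$ follows once we know
\[
H^0\bigl(X,\Omega^a_X(\log D)\otimes\cO_X(-t)\bigr)=0
\]
for every $a$ with $1\le a\le n-1$ and every integer $t$ with $t>\tfrac{a(-s+\sum_{i}k_i)}{n}$. The first step is to cash in the hypothesis: since $\cO_X(1)$ is ample and generates $\mathrm{Pic}(X)$, the line bundle $K_X\otimes\cO_X(\sum_i k_i)=\cO_X\bigl(-s+\sum_i k_i\bigr)$ is ample or trivial precisely when $-s+\sum_i k_i\ge0$, and then $\tfrac{a(-s+\sum_i k_i)}{n}\ge0$ for all $a\ge1$. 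Hence every integer $t$ in the relevant range satisfies $t\ge1$, and it is enough to prove
\[
H^0\bigl(X,\Omega^a_X(\log D)\otimes\cO_X(-t)\bigr)=0,\qquad 1\le a\le n-1,\ t\ge1.
\]

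For this I would invoke the logarithmic Bogomolov--Sommese vanishing theorem: every invertible subsheaf of $\Omega^a_X(\log D)$ has Iitaka dimension at most $a$. A nonzero element of $H^0(X,\Omega^a_X(\log D)(-t))$ is the same as a nonzero sheaf map $\cO_X(t)\to\Omega^a_X(\log D)$, and because $\Omega^a_X(\log D)$ is locally free (hence torsion free) the image of such a map is isomorphic to $\cO_X(t)$, so $\Omega^a_X(\log D)$ contains $\cO_X(t)$ as an invertible subsheaf. But for $t\ge1$ the bundle $\cO_X(t)$ is ample, of Iitaka dimension $n>a$, which contradicts the theorem. This establishes the vanishing, and Lemma~\ref{slopelog}(b) then gives the semistability of $\Omega_X(\log D)$.

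If one prefers to stay within the toolkit of residue sequences and classical vanishing used elsewhere in the paper, the same vanishing can instead be obtained by induction on the number of components of $D$ via the sequence~\eqref{severalres}: peeling off $D_1$ exhibits $\Omega^a_X(\log D)(-t)$ as an extension of $\Omega^{a-1}_{D_1}(\log(D-D_1)_{|D_1})\otimes\cO_X(-t)_{|D_1}$ by $\Omega^a_X(\log(D-D_1))(-t)$, and restricting to $D_1$ lowers both the dimension and the exterior degree by one, so the inequality $(\text{exterior degree})\le(\dim)-1$ is preserved and the top exterior power is never reached. The recursion bottoms out either at $D=\emptyset$, where one needs $H^0(Y,\Omega^a_Y(-t))=0$ for $1\le a\le\dim Y-1$ and $t\ge1$ --- ordinary Bogomolov--Sommese (or Nakano) vanishing --- or when the exterior degree drops to $0$, where it reduces to the vanishing of $H^0$ of an anti-ample line bundle on a positive-dimensional smooth projective variety; the strata of $D$ are smooth by transversality and can be treated componentwise if disconnected.

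The step I expect to be the main obstacle is exactly this twisted vanishing for $\Omega^a_X(\log D)$ with $a<n$; the reduction to it is formal, and the hypothesis that $K_X\otimes\cO_X(\sum_i k_i)$ be ample or trivial enters only to force the relevant twists $t$ to be positive. All the genuine content --- and the only place where one must be careful about the logarithmic structure itself and about what is available in the literature in precisely the needed generality --- lies in that vanishing statement.
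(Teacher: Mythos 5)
Your proof is correct, and its skeleton is identical to the paper's: apply Lemma~\ref{slopelog}(b), observe that the hypothesis $K_X+\cO_X(\sum_i k_i)\geq 0$ forces the relevant twists to be strictly negative (equivalently, your $t\geq 1$), and then invoke the vanishing $H^0(X,\Omega^a_X(\log D)(-t))=0$ for $t\geq 1$, $1\leq a<n$. Where you differ is in the key vanishing input: the paper uses Norimatsu's theorem (its Lemma~\ref{vanish}), proved by induction on the number of components of $D$ via the residue sequence \eqref{severalres} with Akizuki--Kodaira--Nakano at the base, whereas your primary route invokes logarithmic Bogomolov--Sommese vanishing (an invertible subsheaf of $\Omega^a_X(\log D)$ has Iitaka dimension at most $a$, cf.\ \cite[Cor.~6.9]{Es-Vi}), noting that a nonzero section of $\Omega^a_X(\log D)(-t)$ with $t\geq 1$ would embed the ample line bundle $\cO_X(t)$, of Iitaka dimension $n>a$, into $\Omega^a_X(\log D)$. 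That argument is valid (the injectivity of $\cO_X(t)\to\Omega^a_X(\log D)$ follows as you say from torsion-freeness of the target), and it buys a conceptually cleaner one-line vanishing at the cost of citing a deeper theorem; it also makes transparent why only $a<n$ is needed. Your fallback induction is essentially verbatim the paper's proof of Lemma~\ref{vanish}, and your bookkeeping there (the invariance of $\dim - a$ under restriction, so the top exterior power and zero-dimensional strata are never reached) is accurate. No gaps.
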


It suffices to prove vanishing of relevant cohomologies as indicated in Lemma \ref{div} b).

We first prove the following vanishing. This is well-known and due to Norimatsu \cite{Norimatsu}. For the sake of completeness, we provide a simpler proof:

\begin{lemma}\label{vanish}
Suppose $(Y,\cO_Y(1))$ is a smooth projectve variety of dimension $n$. Let $D\subset Y$ be a normal crossing divisor and $D$ is written as $\sum_{i=1}^r D_i$. Then for $t<0$,
$$
H^0(Y,\Omega^a_Y(log\,D)(t))\,=\,0.
$$
\end{lemma}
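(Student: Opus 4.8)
The plan is to prove the vanishing by induction on $r$, the number of irreducible components of $D$, using the residue sequence \eqref{severalres} to peel off one component at a time and reduce to the case $r = 0$, where the statement is the classical fact that $H^0(Y, \Omega^a_Y(t)) = 0$ for $t < 0$. This last fact follows because $\Omega^a_Y(t)$ is a subsheaf of a direct sum of copies of $\cO_Y(t)$ (locally, after choosing a frame of $1$-forms), and $H^0(Y, \cO_Y(t)) = 0$ for $t < 0$ since $\cO_Y(1)$ is ample; more precisely one can restrict to a general complete intersection curve or simply note a nonzero section of $\Omega^a_Y(t)$ would give a nonzero map $\cO_Y(-t) \to \Omega^a_Y$, contradicting that $\Omega^a_Y$ has no sections twisted down by an ample bundle. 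Actually, the cleanest route is: a nonzero section of $\Omega^a_Y(t)$ is a nonzero global section of a locally free sheaf which, restricted to a general smooth curve $C$ cut out by hyperplanes, gives a subsheaf $\cO_C \hookrightarrow \Omega^a_Y(t)|_C$; but $\deg(\Omega^a_Y(t)|_C) = \deg \Omega^a_Y|_C + t \cdot \deg \cO_Y(1)|_C$, and while $\deg \Omega^a_Y|_C$ need not be negative, one instead uses that the section is nowhere vanishing is not forced — so I would fall back on the subsheaf-of-$\bigoplus \cO_Y(t)$ argument, which is unconditional.

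Next, for the inductive step, suppose the statement holds for normal crossing divisors with fewer than $r$ components on any smooth projective variety. Write $D = D_1 + (D - D_1)$ and consider the residue sequence \eqref{severalres}:
$$
0 \rar \Omega^a_Y(\log(D - D_1)) \rar \Omega^a_Y(\log D) \rar \Omega^{a-1}_{D_1}(\log (D - D_1)|_{D_1}) \rar 0.
$$
Twisting by $\cO_Y(t)$ and taking the long exact sequence in cohomology, I get
$$
0 \rar H^0(Y, \Omega^a_Y(\log(D - D_1))(t)) \rar H^0(Y, \Omega^a_Y(\log D)(t)) \rar H^0(D_1, \Omega^{a-1}_{D_1}(\log (D-D_1)|_{D_1})(t)|_{D_1}).
$$
The left-hand term vanishes by the inductive hypothesis applied to $(Y, D - D_1)$, since $D - D_1$ has $r - 1$ components. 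The right-hand term vanishes by the inductive hypothesis applied to the smooth projective variety $D_1$ with its normal crossing divisor $(D - D_1)|_{D_1}$ (which has at most $r - 1$ components and is normal crossing by transversality), polarized by $\cO_Y(1)|_{D_1}$, again for the negative twist $t < 0$. Hence the middle term vanishes, completing the induction.

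The main obstacle is bookkeeping rather than conceptual: one must check that $(D - D_1)|_{D_1}$ is genuinely a simple normal crossing divisor on $D_1$ (this is exactly the transversality hypothesis built into the definition of normal crossing), and that the induction is set up on a class of pairs broad enough to be closed under the operation "restrict to a component" — which is why the statement is phrased for an arbitrary smooth projective $(Y, \cO_Y(1))$ rather than the fixed $X$ of the paper. One should also record the base case $a = 0$ separately if desired ($\Omega^0_Y(\log D) = \cO_Y$, and $H^0(Y, \cO_Y(t)) = 0$ for $t < 0$), though it is subsumed by the general argument. No positivity hypothesis on $Y$ or on $K_Y + D$ is needed here; the strict negativity $t < 0$ of the twist is what drives everything, via ampleness of $\cO_Y(1)$.
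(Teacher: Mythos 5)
Your inductive step is exactly the paper's argument: peel off one component with the residue sequence \eqref{severalres}, use left-exactness of $H^0$, and apply the induction hypothesis both to $(Y,D-D_1)$ and to the pair $(D_1,(D-D_1)|_{D_1})$ polarized by $\cO_Y(1)|_{D_1}$. Running the induction down to $r=0$ instead of stopping at $r=1$ (the paper's base case, which it handles by the single-divisor residue sequence plus vanishing on $Y$ and on $D$) is a harmless, even slightly cleaner, variant. The genuine gap is in your base case. The statement you need there, $H^0(Y,\Omega^a_Y(t))=0$ for $t<0$, is the $H^0$-part of the Kodaira--Akizuki--Nakano vanishing theorem, which is precisely what the paper cites (\cite[1.3, p.4]{Es-Vi}); it is a real theorem about forms with values in a negative line bundle, not a formal consequence of ampleness of $\cO_Y(1)$, and it requires $a<n$.

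Both of your attempted elementary substitutes fail. The ``subsheaf of $\bigoplus\cO_Y(t)$'' argument, which you declare unconditional and fall back on, is invalid: local freeness only gives local trivializations $\Omega^a_Y|_U\cong\cO_U^{\oplus{n\choose a}}$, and a local embedding says nothing about global sections; there is no global injection $\Omega^a_Y\hookrightarrow\cO_Y(t)^{\oplus N}$ in general. A quick sanity check exposes the problem: any argument insensitive to the value of $a$ would also prove $H^0(Y,\Omega^n_Y(t))=H^0(Y,K_Y(t))=0$, which is false (take $Y\subset\p^3$ a smooth hypersurface of degree $d\geq 6$ and $t=-1$). Your other suggestion --- a nonzero section gives a nonzero map $\cO_Y(-t)\to\Omega^a_Y$, ``contradicting that $\Omega^a_Y$ has no sections twisted down by an ample bundle'' --- is circular, since that is exactly the claim being proved; and you correctly discard the restriction-to-a-curve idea yourself. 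The proof is complete once the base case is replaced by a citation of Akizuki--Kodaira--Nakano (equivalently, by Serre duality, $H^n(Y,\Omega^{n-a}_Y(-t))=0$ because $(n-a)+n>n$ when $a<n$), with the restriction $1\leq a<n$ made explicit --- which is all the paper ever uses.
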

\begin{proof}
We prove this by using induction on the number of components $r$ of the divisor $D$.

We start with the case $r=1$.

Consider the residue sequence
$$
0\rar \Omega^a_Y \rar \Omega^a_Y(log\,D) \rar \Omega^{a-1}_D\rar 0.
$$
Tensor with $\cO(t)$, $t<0$, and take the long exact cohomology sequence:
$$
0\rar H^0(Y,\Omega^a_Y(t)) \rar H^0(Y,\Omega^a_Y(log\,D)(t))\rar H^0( D,\Omega^{a-1}_D(t)) \rar ...
$$
Since $t<0$, by Kodaira-Akizuki-Nakano theorem \cite[1.3,p.4]{Es-Vi}, the first and the third cohomology groups vanish . This implies the middle cohomology $H^0(Y,\Omega^a_Y(log\,D)(t))$ also vanishes. 

Now assume that the lemma holds for divisors with at most $r-1$ components.
Consider the residue sequence \eqref{severalres} and tensor with $\cO(t)$, for $t<0$. Now take the associated cohomology sequence
$$
0\rar H^0(Y,\Omega^a_Y(log(D-D_1))(t))\rar H^0(Y,\Omega^a_Y(log\, D)(t)) \rar H^0(D_1,\Omega^{a-1}_{D_1}(log(D-D_1)_{|D_1})(t))\rar ...
$$
By induction hypothesis applied to $D-D_1$ on $Y$ and $D_1$, we deduce the vanishing of the middle cohomology as required.

\end{proof}

The proof of Proposition \ref{stableample} is a corollary of above lemma. Indeed, by Lemma \ref{div} b), it suffices to check the vanishing
$$
H^0(X,\Omega^a_X(log \,D)(t))=0
$$
for $t\,<\, \frac{a.(s - \sum_{i=1}^rk_i)}{n}$ and $ 1\leq a<n $. Recall that $K_X=\cO_X(-s)$, $s$ is an integer.

The assumption on $K_X+D$ being ample or trivial implies that $s\leq \sum_{i=1}^rk_i$.
Hence the slope condition $t < \frac{a.(s-\sum_{i=1}^rk_i)}{n}$ implies $t<0$. Now by Lemma \ref{vanish} we conclude the proposition.

%%%%%%%%%%%%%%%%%%%%%%%%%%%%%%%%%%%%%%%%%%%%%%%%%%%%%%%%%%%%%%%%%%%%%%%%%%%%%%%%%%%%%%%%%%%%%%%%%%%%%%%%%%%%%%%%%%%%%%%  
  
  \subsection{Stability on Kawamata's finite coverings}
  
  In this subsection, we recall some details concerning branched finite coverings of a complex projective variety, and investigate stability of the logarithmic de Rham sheaves on the covering variety. Note that the Picard group of such coverings can be bigger than $\Z$ (as pointed out by Ein). Hence it is of interest to look at such cases.
  
 We begin by recalling Kawamata's covering construction:
 \begin{proposition}\label{Kaw}
 Let $(Y,\cO_Y(1)$ be a nonsingular projective variety of dimension $n$. Let $D=\sum_{i=1}^r D_i$ be a simple normal crossing divisor on $Y$ and $D_i\in |\cO_Y(k_i)|$, for some positive integers $k_i$.
 Then there is a smooth variety $X$ together with a finite flat morphism $\pi:X\rar Y$ such that $\pi^*D_i=k_i.D_i'$, for some divisors $D'_i$ on $X$ such that $D'=\sum_{i=1}^rD_i'$ is a normal crossing divisor on $X$.
 Furthermore, the canonical class $K_X=\pi^*(K_Y \otimes\cO_Y(\sum_{i=1}^r(k_i-1)))$.  
 \end{proposition}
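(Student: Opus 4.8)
The plan is to construct $X$ as the fibre product over $Y$ of one cyclic cover for each component $D_i$, and then to read off every assertion from the local normal form of such a cover along a smooth divisor. This is the classical Kawamata covering trick (cf. \cite{Es-Vi}), made especially transparent here by the hypothesis $\mathrm{Pic}(Y)=\Z\cdot\cO_Y(1)$, which already makes each $D_i$ divisible in the Picard group.

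First, for each $i$ I would choose $s_i\in H^0(Y,\cO_Y(k_i))$ with $\mathrm{div}(s_i)=D_i$ and form the degree $k_i$ cyclic cover $\pi_i\colon Z_i:=\mathrm{Spec}_Y\bigl(\bigoplus_{j=0}^{k_i-1}\cO_Y(-j)\bigr)\rar Y$, the algebra structure being induced by $s_i\colon\cO_Y(-k_i)\rar\cO_Y$. Since $\cO_Y(1)$ is ample, the summands $\cO_Y(-j)$ with $j\ge 1$ have no sections, so $Z_i$ is connected; it is \'etale over $Y\setminus D_i$ and, near $D_i$, locally of the form $\{t_i^{k_i}=u_i\}$ with $u_i$ part of a coordinate system cutting out the smooth divisor $D_i$, hence $Z_i$ is smooth. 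I would record the standard facts that the tautological section of $\pi_i^{*}\cO_Y(1)$ has divisor $D_i'':=(\pi_i^{*}D_i)_{red}$ with $\pi_i^{*}D_i=k_iD_i''$ and $\cO_{Z_i}(D_i'')\cong\pi_i^{*}\cO_Y(1)$, together with the ramification formula $K_{Z_i}=\pi_i^{*}\bigl(K_Y\otimes\cO_Y(k_i-1)\bigr)$.

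Next I would set $X:=Z_1\times_Y\cdots\times_YZ_r$, with the induced finite flat morphism $\pi\colon X\rar Y$ and projections $q_i\colon X\rar Z_i$, and check smoothness locally. Near a point $p\in Y$ lying on exactly the components $D_1,\dots,D_c$ (after relabelling), the normal crossing hypothesis provides coordinates $u_1,\dots,u_n$ on $Y$ with $D_i=\{u_i=0\}$ for $i\le c$; then $\pi_j$ is \'etale near $p$ for $j>c$, while $Z_i$ is locally $\{t_i^{k_i}=u_i\}$ for $i\le c$, so near the fibre over $p$ the variety $X$ is isomorphic to affine space with coordinates $t_1,\dots,t_c,u_{c+1},\dots,u_n$ (and $u_i=t_i^{k_i}$ for $i\le c$), in particular smooth. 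As before $X$ is connected, because the nontrivial summands of $\pi_{*}\cO_X$ have no sections; being smooth and connected it is irreducible. Setting $D_i':=(\pi^{*}D_i)_{red}$, this chart gives $D_i'=\{t_i=0\}$, so the $D_i'$ are smooth, $D'=\sum_iD_i'$ is normal crossing, $\pi^{*}D_i=k_iD_i'$, and $D_i'=q_i^{*}D_i''$; hence $\cO_X(D_i')\cong q_i^{*}\cO_{Z_i}(D_i'')\cong\pi^{*}\cO_Y(1)$.

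Finally, for the canonical class: $\pi$ is a generically \'etale, tamely ramified finite flat morphism of smooth varieties, with ramification index $k_i$ along $D_i'$ and \'etale elsewhere, so its ramification divisor is $R=\sum_{i=1}^r(k_i-1)D_i'$ and $\omega_X\cong\pi^{*}\omega_Y\otimes\cO_X(R)$, that is, $K_X=\pi^{*}K_Y+\sum_{i=1}^r(k_i-1)D_i'$. Substituting $\cO_X(D_i')\cong\pi^{*}\cO_Y(1)$ then gives $K_X=\pi^{*}\bigl(K_Y\otimes\cO_Y(\sum_{i=1}^r(k_i-1))\bigr)$, as claimed. I expect the only real obstacle to be the smoothness of the fibre product together with the claim that $D'$ stays normal crossing; both reduce to the explicit affine-space chart above, and that is precisely the step where the normal crossing hypothesis on $D$ is used.
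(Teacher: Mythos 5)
Your construction is correct and complete, but it is worth noting that the paper does not prove this proposition at all: its ``proof'' is a bare citation to Lazarsfeld's general Kawamata covering construction \cite[4.1.6, 4.1.12]{Lazarsfeld}. Your argument is therefore a genuinely different (and in this setting simpler) route. The general result cited in the paper must first arrange divisibility of the classes $\cO_Y(D_i)$ in $\mathrm{Pic}(Y)$ via Bloch--Gieseker coverings and auxiliary general divisors; you observe that the hypothesis $D_i\in|\cO_Y(k_i)|$ already provides $\cO_Y(D_i)\cong\cO_Y(1)^{\otimes k_i}$, so the plain fibre product of simple cyclic covers suffices. The price is that you must verify by hand what the citation delivers wholesale --- smoothness and connectedness of the fibre product, the normal crossing property of $D'$, and the ramification formula --- and you do all of this correctly: the local chart $u_i=t_i^{k_i}$ gives smoothness and $\pi^{*}D_i=k_iD_i'$, the vanishing of $H^0(Y,\cO_Y(-m))$ for $m>0$ gives connectedness, and your identification $\cO_X(D_i')\cong\pi^{*}\cO_Y(1)$ via the tautological section (rather than by cancelling $k_i$ in the Picard group, which would require torsion-freeness) is exactly the right way to convert $K_X=\pi^{*}K_Y+\sum_i(k_i-1)D_i'$ into the stated formula $K_X=\pi^{*}(K_Y\otimes\cO_Y(\sum_i(k_i-1)))$. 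One cosmetic remark: the divisors $D_i'$ so produced need not be irreducible, but the proposition only asks for divisors with $D'$ normal crossing, which your chart establishes.
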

 \begin{proof}
 See \cite[4.1.6,4.1.12]{Lazarsfeld}.
 \end{proof}
 
 \begin{remark}
 Denote $\cO_X(1):=\pi^*\cO_Y(1)$ and $K_Y=\cO_Y(-s)$. Then the slope of $\Omega_X(log\, D')$ is
 \begin{eqnarray*}
 &=&   \frac{c_1(\Omega_X)+ c_1(\cO_X(D'))}{n}. \cO_X(1)^{n-1}\\
  &=& \frac{c_1(\cO_X(-s+\sum_{i=1}^rk_i-r) +\cO_X(r))}{n}. \cO_X(1)^{n-1} \,\,(\m{ by Proposition }\ref{Kaw} ) \\
 &=& \frac{-s+\sum_{i=1}^rk_i}{n}. \cO_X(1)^n.
 \end{eqnarray*}
  \end{remark}
                                    
 \begin{proposition}\label{kawcover}
 We keep notations as in Proposition \ref{Kaw} for the covering variety $\pi:X\rar Y$. Assume that $Pic(Y)=\Z.\cO_Y(1)$ and $k:=\sum_{i=1}^r k_i$. If $K_Y+\cO_Y(k)$ is ample or trivial then $\Omega_X(log\,D')$ is semistable.
 \end{proposition}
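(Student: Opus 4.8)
The plan is to reduce the statement on $X$ to the already-settled case of the base $Y$, via the principle that finite pullbacks preserve $\mu$-semistability. First I would apply Proposition \ref{stableample} to the pair $(Y,D)$ itself: its hypotheses are exactly $Pic(Y)=\Z.\cO_Y(1)$, $D_i\in|\cO_Y(k_i)|$, and $K_Y+\cO_Y(k)$ ample or trivial with $k=\sum_{i=1}^r k_i$, so $\Omega_Y(\log D)$ is $\mu$-semistable with respect to $\cO_Y(1)$.

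Next I would use the standard fact that if $f\colon X\to Y$ is a finite surjective morphism of smooth projective varieties and $H$ is ample on $Y$, then the pullback of an $H$-semistable sheaf is $(f^*H)$-semistable; applied to $\pi$ and $\cO_Y(1)$ this shows $\pi^*\Omega_Y(\log D)$ is $\mu$-semistable with respect to $\cO_X(1)=\pi^*\cO_Y(1)$, which is ample since $\pi$ is finite. In characteristic zero this is classical — one may pass to a Galois closure of $\pi$ and average a hypothetical destabilising subsheaf over the Galois group — but it must be invoked with some care, as $\pi$ is not assumed Galois. It then remains to identify $\pi^*\Omega_Y(\log D)$ with $\Omega_X(\log D')$. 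Using $\pi^*D_i=k_iD_i'$ from Proposition \ref{Kaw}, a local equation $f_i$ of $D_i$ pulls back to $u\cdot g_i^{k_i}$ with $u$ a unit and $g_i$ a local equation of $D_i'$, so $\pi^*\frac{df_i}{f_i}=\frac{du}{u}+k_i\frac{dg_i}{g_i}$ is a local section of $\Omega_X(\log D')$; combined with $d\pi\colon\pi^*\Omega_Y\to\Omega_X$ this yields a natural map $\psi\colon\pi^*\Omega_Y(\log D)\to\Omega_X(\log D')$, injective because it is an isomorphism at the generic point between locally free sheaves of rank $n$. By the slope computation in the Remark preceding the proposition, the difference $c_1(\Omega_X(\log D'))-c_1(\pi^*\Omega_Y(\log D))$ has zero degree against $\cO_X(1)^{n-1}$; hence $\mathrm{coker}(\psi)$, a torsion sheaf whose first Chern class equals this difference, is supported in codimension $\ge 2$, and $\det\psi$ is a map of line bundles whose zero divisor is effective of degree zero against $\cO_X(1)^{n-1}$, hence empty. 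Therefore $\psi$ is an isomorphism, and $\Omega_X(\log D')\cong\pi^*\Omega_Y(\log D)$ is $\mu$-semistable with respect to $\cO_X(1)$.

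I expect the last step — the identification $\pi^*\Omega_Y(\log D)\cong\Omega_X(\log D')$ — to be the main point requiring care: one must check the comparison map is an isomorphism along the ramification of $\pi$ and not merely generically, and it is precisely here that the structure of a Kawamata covering (the divisibility $\pi^*D_i=k_iD_i'$ and the canonical bundle formula of Proposition \ref{Kaw}) is used. By contrast, the pullback-of-semistable input is classical, though it should be cited precisely because $\pi$ need not be Galois, and the reduction to $Y$ in the first step is immediate from Proposition \ref{stableample}.
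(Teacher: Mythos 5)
Your proof is correct and follows the same three-step route as the paper: semistability of $\Omega_Y(\log D)$ via Proposition \ref{stableample}, preservation of $\mu$-semistability under finite pullback (for which the paper cites Maruyama, Lemma 1.17, and you invoke the equivalent Galois-closure averaging argument), and the identification $\pi^*\Omega_Y(\log D)\cong \Omega_X(\log D')$. The only divergence is that for this last identification the paper simply cites the Generalised Hurwitz formula of Esnault--Viehweg (Lemma 3.21), whereas you prove it by hand with the comparison map $\psi$ and the degree-zero determinant argument; that self-contained substitute is valid and correctly pinpoints where the Kawamata divisibility $\pi^*D_i=k_iD_i'$ enters.
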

\begin{proof}
 Since $K_Y + \cO(k)$ is ample or trivial, by Proposition \ref{stableample}, the sheaf $\Omega_Y(log\, D)$ is
semistable.
  By the Generalised Hurwitz formula \cite[Lemma 3.21, p.33]{Es-Vi}, we have
$$
\Omega_X(log\, D')\simeq \pi^*\Omega_Y (log\, D ).
$$
  Now by \cite[Lemma 1.17,p. 325]{Maruyama}, we deduce that the pullback sheaf $\Omega_X (log\, D')$ is also
semistable, with respect to the ample line bundle $\pi^*\cO_Y(1)$.
\end{proof}

Now in the next section, we investigate the situation when the class $K_X+D$ is anti-ample.

%%%%%%%%%%%%%%%%%%%%%%%%%%%%%%%%%%%%%%%%%%%%%%%%%%%%%%%%%%%%%%%%%%%%%%%%%%%%%%%%%%%%%%%%%%%%%%%%%%%%%%%%%%%%%%%%%%%%%%%%%%%%%%%%%%%

\section{Log Fano manifolds of small dimensions}
%%%%%%%%%%%%%%%%%%%%%%%%%%%%%%%%%%%%%%%%%%%%%%%%%%%%%%%%%%%%%%%%%%%%%%%%%%%%%%%%%%%%%%%%%%%%%%%%%%%%%%%%%%%%%%%%%

A pair $(X,D)$ is a called a log Fano $n$-fold if the class $-K_X-D$ is ample. Here $D=\sum_iD_i$ is a normal crossing divisor, $D_i$ are smooth irreducible divisors.

Assume that $Pic(X)=\Z.H$ and the anti canonical class is $-K_X=s.H$ and $D\in |k.H|$, for some $s,k>0$.
Hence the assumption on ampleness of $-K_X-D$ implies that $s>k$. In particular $s\geq 2$.

In this section we would like to discuss stability for possible cases, when $n$ is small.

%%%%%%%%%%%%%%%%%%%%%%%%%%%%%%%%%%%%%%%%%%%%%%%%%%%%%%%%%%%%%%%%%%%%%%%%%%%%%%%%%%%%%%%%%%%%%%%
\subsection{$n\,=\,2$}\label{dpsurfaces}

Here $(X,D)$ is a Del Pezzo surface with $Pic(X)=\Z.H$. By Fujita's classification theorem \cite[p.87]{Maeda}, the following cases for $(X,D)$ occur:

a) $(\p^2,H)$, where $H$ is a line on $\p^2$.

b) $(\p^2,H_1+H_2)$, where $ H_1,H_2$ are lines on $\p^2$.

c) $(\p^2,Q)$, where $Q$ is a conic in $\p^2$.

\subsection{$n\,=\,3$}\label{threefolds}
  
Here $(X,D)$ is log Fano threefold with $Pic(X)=\Z.H$. By Maeda's classification \cite[\S 6,p.95]{Maeda} according to the index $s$, the following cases occur:

a) $\bf{s=4}$ and $X=\p^3$. Here $D$ is equivalent to $H,2H$ or $3H$. Hence we have,

1) $(\p^3,D)$, where $D$ is a smooth cubic surface.

2) $(\p^3,D)$, where $D=D_1+D_2$, and $D_1$ is a smooth quadric surface and $D_2$ is a plane.

3) $(\p^3,D)$, where $D=D_1+D_2+D_3$, and each $D_i$ is a plane.

4) $(\p^3,D)$, where $D$ is a smooth quadric surface.

5) $(\p^3,D)$, where $D=D_1+D_2$, and each $D_i$ is a plane.

6) $(\p^3,D)$, where $D$ is a plane.

b) $\bf{s=3}$ and $X=Q$, a smooth quadric threefold in $\p^4$. Here $D$ is equivalent to $H$ or $2H$. Hence we have,

1) $(Q,D)$, where $D$ is a smooth quartic surface in $\p^4$.

2) $(Q,D)$, where $D=D_1+D_2$ and each $D_i$ is a smooth quadric surface.

3) $(Q,D)$, where $D$ is a smooth quadric surface.

c) $\bf{s=2}$. There are five different types of Fano $3$-folds and $D$ is a smooth irreducible divisor in the linear system $|H|$.

\subsection{$n=4,5,6$} Here the possibilities are more and we refer to \cite{Fujita}.

We can now state the main result of this section.

We will need the following result in the proof.

\begin{lemma}\label{surjective}
Suppose $(Y,\cO(1))$ is a smooth projective variety of dimension $n$, and $D$ is a smooth irreducible divisor in $|\cO(k)|$. Fix $q < n-1$. Then the restriction map
$$
H^0(Y,\Omega^q_Y(c))\rar H^0(D,\Omega^q_D(c))
$$
is surjective, for all $c < k$.
\end{lemma}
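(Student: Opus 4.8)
The plan is to fit the restriction map into the long exact sequence coming from the conormal/residue sequence for the smooth divisor $D \subset Y$, and to kill the obstruction term with a Kodaira–Akizuki–Nakano type vanishing. Concretely, I would use the standard exact sequence relating $\Omega^q_Y$, $\Omega^q_Y(\log D)$ and $\Omega^{q-1}_D$, namely
\[
0 \rar \Omega^q_Y \rar \Omega^q_Y(\log D) \rar \Omega^{q-1}_D \rar 0,
\]
together with the tautological sequence obtained by restricting $\Omega^q_Y$ to $D$ and using the conormal bundle $\mathcal O_D(-k) = \mathcal O_D(-D)$:
\[
0 \rar \Omega^{q-1}_D(-k) \rar \Omega^q_Y|_D \rar \Omega^q_D \rar 0.
\]
The second of these, twisted by $\mathcal O(c)$, shows that the restriction map $H^0(Y,\Omega^q_Y(c)) \rar H^0(D,\Omega^q_D(c))$ — which factors as $H^0(Y,\Omega^q_Y(c)) \rar H^0(D, \Omega^q_Y|_D(c)) \rar H^0(D,\Omega^q_D(c))$ — will be surjective once I control two things: first, that $H^0(Y,\Omega^q_Y(c)) \rar H^0(D,\Omega^q_Y|_D(c))$ is onto, and second, that $H^0(D,\Omega^q_Y|_D(c)) \rar H^0(D,\Omega^q_D(c))$ is onto.

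For the first surjectivity I would twist the ideal-sheaf sequence $0 \rar \Omega^q_Y(c-k) \rar \Omega^q_Y(c) \rar \Omega^q_Y|_D(c) \rar 0$ and take cohomology; surjectivity of the global sections map follows from $H^1(Y,\Omega^q_Y(c-k)) = 0$. Since $c < k$ we have $c - k < 0$, so this is exactly a Kodaira–Akizuki–Nakano vanishing $H^{p}(Y,\Omega^q_Y(m)) = 0$ for $m<0$ and $p+q < n$ (here $p=1$, and we need $1+q<n$, i.e. $q<n-1$, which is the hypothesis). For the second surjectivity I take cohomology of the twisted conormal sequence $0 \rar \Omega^{q-1}_D(c-k) \rar \Omega^q_Y|_D(c) \rar \Omega^q_D(c) \rar 0$ on $D$; surjectivity on $H^0$ follows from $H^1(D,\Omega^{q-1}_D(c-k)) = 0$, again KAN vanishing on the $(n-1)$-dimensional smooth projective variety $D$ since $c-k<0$ and $1+(q-1) = q < n-1 \le \dim D$. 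Chaining these two surjections gives the claim. (Equivalently, one can run this directly with the residue sequence for $\Omega^q_Y(\log D)$ and Lemma \ref{vanish}, but the conormal-sequence route keeps the numerology transparent.)

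The main obstacle — really the only place one must be careful — is the bookkeeping on the KAN hypotheses: one must check that the cohomological degree plus the form degree stays strictly below the dimension of the ambient variety in each application, and this is precisely where the hypothesis $q < n-1$ enters (it is needed for the vanishing on $D$, where $q$ must be $<\dim D = n-1$), while the bound $c<k$ is exactly what forces the twisting line bundle $\mathcal O(c-k)$ to be anti-ample so that KAN applies. A minor point to verify is that the composite $H^0(Y,\Omega^q_Y(c)) \to H^0(D,\Omega^q_D(c))$ really is the naive restriction-of-forms map and agrees with the map induced by the two sequences above; this is standard but worth a sentence. No positivity or Fano hypothesis on $Y$ is used here — only smoothness of $Y$ and of $D$, and the polarization is used only to make sense of the twists $\mathcal O(c)$.
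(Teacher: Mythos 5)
Your argument is correct and complete: the two short exact sequences (the ideal‑sheaf sequence for $\Omega^q_Y$ restricted to $D$ and the twisted conormal sequence $0\rar \Omega^{q-1}_D(c-k)\rar \Omega^q_Y|_D(c)\rar \Omega^q_D(c)\rar 0$) together with Kodaira--Akizuki--Nakano vanishing, applied once on $Y$ (needing $1+q<n$) and once on $D$ (needing $q<n-1$), give exactly the claimed surjectivity. The paper does not write out a proof but defers to Peternell--Wisniewski, Lemma 2.9 a), whose argument is precisely this one, so your proposal matches the intended proof.
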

\begin{proof}
See proof of \cite[Lemma 2.9 a)]{Peternell}.
\end{proof}

\begin{proposition}\label{logstability}
Suppose $(X,D)$ is a log Fano manifold of dimension $n$ and $Pic(X)=\Z.H$. Let $K_X=\cO_X(-s)$ and $D\in \cO_X(k)$ such that $s,k>0$.

Assume one of the following holds:

a) $n=2$ and $s=3$,

b) $n=3$ and $s\leq 4$

c) $n=4$ and $s\leq 5$

d) $n=5$ and $s\leq 6$ such that $s=2,5,6$ or $(s,k)=(3,2),(4,3)$.

e) $n=6$ and $s\leq 7$ such that $s\leq 4$, $s=6,7$, or  $(s,k)=(5,4),(5,3)$ .

If $D$ is smooth and irreducible then the logarithmic cotangent
bundle $\Omega_X(log\,D)$ is semistable.
\end{proposition}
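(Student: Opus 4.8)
The plan is to reduce the assertion to a cohomology vanishing through Lemma \ref{slopelog}(b). Since $(X,D)$ is log Fano we have $s>k$, so $\mu(\Omega_X(log\,D))=\f{k-s}{n}\,\cO_X(1)^n<0$, and it suffices to prove
$$
H^0\bigl(X,\Omega^a_X(log\,D)(m)\bigr)=0\qquad\text{for every integer }m<\f{a(s-k)}{n}\text{ and every }1\le a\le n-1 .
$$
For $m<0$ this is Lemma \ref{vanish} (Norimatsu's vanishing), so only the finitely many values $0\le m<\f{a(s-k)}{n}$ remain. Here I would bring in the classifications of Maeda \cite{Maeda} and Fujita \cite{Fujita}: under the numerical hypotheses (a)--(e) the manifold $X$ is $\p^n$, a smooth quadric $Q^n$, or one of the finitely many Fano manifolds of index $2,3,4$ and dimension $\le 6$ on their lists, and $D\in|\cO_X(k)|$ is a smooth irreducible member.

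The computational engine is the residue sequence \eqref{severalres}, which for $D$ irreducible is $0\rar\Omega^a_X\rar\Omega^a_X(log\,D)\rar\Omega^{a-1}_D\rar 0$. Tensoring with $\cO_X(m)$ and taking cohomology, the required vanishing follows once $H^0(X,\Omega^a_X(m))=0$ and $H^0(D,\Omega^{a-1}_D(m))=0$. The first I would obtain from semistability of the cotangent bundle of $X$ --- Bott vanishing on $\p^n$, its quadric analogue on $Q^n$, and the known (semi)stability of the tangent/cotangent bundle of the low-index Fano manifolds of dimension $\le 6$ that occur (Peternell--Wi\'sniewski and related work). In characteristic zero exterior powers of semistable bundles are semistable, so $\Omega^a_X$ is semistable of slope $\f{-as}{n}\cO_X(1)^n$; a nonzero section of $\Omega^a_X(m)$ would give $\cO_X(-m)\hookrightarrow\Omega^a_X$, whose saturation has slope at least $-m\,\cO_X(1)^n$, contradicting semistability once $m<\f{as}{n}$. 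Since $k>0$, $m<\f{a(s-k)}{n}<\f{as}{n}$, which yields the first vanishing.

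For the restriction term, $D$ is by adjunction a Fano manifold of dimension $n-1$ with $-K_D=\cO_D(s-k)$, where $\cO_D(1)=\cO_X(1)_{|D}$, and I would bound $H^0(D,\Omega^{a-1}_D(m))$ using two complementary inputs, chosen so that together they cover the whole range $m<\f{a(s-k)}{n}$. First, Lemma \ref{surjective} (applicable since $a-1<n-1$) gives a surjection $H^0(X,\Omega^{a-1}_X(m))\thrar H^0(D,\Omega^{a-1}_D(m))$ for $m<k$, so the vanishing established above, applied with $a-1$ in place of $a$, forces $H^0(D,\Omega^{a-1}_D(m))=0$ for $m<\min\!\bigl(k,\f{(a-1)s}{n}\bigr)$; a little extra work with the restriction and conormal sequences on $D$, again using Bott-type vanishing on $\p^n$ or $Q^n$ together with the vanishing of the relevant Hodge numbers $h^{p,q}(D)$ of the hypersurface section, carries this slightly past $m=k$ in the few borderline cases. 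Second, when $\dim D\ge 3$ the Lefschetz theorem gives $Pic(D)=\Z.\cO_D(1)$, and semistability of $\Omega_D$ --- from the same small-dimension (semi)stability results applied to the Fano manifold $D$, or from Bott/quadric vanishing when $D$ is a hyperplane or quadric section --- yields $H^0(D,\Omega^{a-1}_D(m))=0$ for $m<\f{(a-1)(s-k)}{n-1}$; in the low-dimensional cases where $D$ is a del Pezzo surface (so Lefschetz does not apply) one uses instead that the cotangent bundle of a del Pezzo surface is semistable, arguing directly from the classification list. Finally, the value $m=0$, $a=1$ is special since $H^0(D,\cO_D)\ne 0$: here one observes that, in the cohomology sequence of \eqref{singleres}, $H^0(X,\Omega^1_X)=0$ as $X$ is Fano, and the connecting map $H^0(D,\cO_D)\rar H^1(X,\Omega^1_X)\cong\C$ sends $1$ to $c_1(\cO_X(D))=k\cdot c_1(\cO_X(1))\ne 0$, hence is injective, so $H^0(X,\Omega_X(log\,D))=0$.

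The main obstacle is precisely the case-by-case bookkeeping compressed into the previous paragraph: one must verify, for every $(X,D)$ allowed by (a)--(e), every $1\le a\le n-1$ and every admissible degree $k$, that one of the two estimates on $D$ actually covers the full range $m<\f{a(s-k)}{n}$ --- and this is exactly why the hypothesis bounds the index $s$ in each dimension and, in dimensions $5$ and $6$, restricts the pair $(s,k)$. The verification also depends on the precise shape of Bott-type vanishing for $\p^n$ and for the quadric in the small range of twists that arises, and on the availability of the tangent-bundle (semi)stability theorem for the particular Fano manifolds of index $2,3,4$ appearing on Maeda's list; outside the stated numerical range at least one of these ingredients fails, which is where the restriction on $s$ (and on $(s,k)$) comes from.
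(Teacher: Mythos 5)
Your proposal is correct and follows essentially the same route as the paper: reduction to the vanishing of $H^0(X,\Omega^a_X(log\,D)(t))$ via Lemma \ref{slopelog}(b), the residue sequence splitting this into a vanishing on $X$ (from stability of $\Omega_X$ on low-dimensional Fanos and Bott vanishing) and one on $D$ (from stability of $\Omega_D$ when Lefschetz applies, Lemma \ref{surjective}, and Snow's vanishing for quadrics), with the $t=0$, $a=1$ case settled by injectivity of the connecting map given by cup product with $c_1(\cO_X(D))\ne 0$. The only difference is that you defer the explicit case-by-case enumeration of the borderline twists $0\le t<\f{a(s-k)}{n}$, which is where the bulk of the paper's proof (and the reason for the precise restrictions on $(s,k)$ in dimensions $5$ and $6$) actually lies.
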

\begin{proof}
Suppose $D$ is a smooth and irreducible divisor.
Note that the ampleness of $-K_X-D$ implies that $s>k$.

From Lemma \ref{slopelog} b), the semistability of $\Omega_X(log\,D)$ is implied by the vanishing
\begin{equation}\label{vanishes}
H^0(X,\Omega^a_X(log \,D)(t))=0,
\end{equation}
for $t < \frac{a.(s - k)}{n}$ and $ 1\leq a<n $.

Recall the residue exact sequence;
$$
0\rar \Omega_X^a(t) \rar  \Omega_X^a(log\,D)(t) \rar \Omega_D^{a-1}(t) \rar 0.
$$
Taking the global sections, we have the long exact sequence
$$
0\rar H^0(X,\Omega_X^a(t) )\rar H^0(X,\Omega_X^a(log\,D)(t)) \rar H^0(D,\Omega_D^{a-1}(t))\rar 
$$
$$
 H^1(X,\Omega_X^a(t)) \rar H^1(X, \Omega_X^a(log\,D)(t))\rar...
$$

Then to prove the vanishing \eqref{vanishes}, it suffices to check that
\begin{equation}\label{vanishL}
H^0(X,\Omega_X^a(t))=0
\end{equation}
and the map
\begin{equation}\label{injR}
H^0(D,\Omega^{a-1}_D(t)) \rar H^1(X,\Omega^a_X(t))
\end{equation}
is injective, whenever $t< \frac{a.(s-k)}{n}$ and $1\leq a<n$.

We now look at the cases listed above, according to the dimension $n$.

a) $\bf{n=2}$ and $\bf{s=3}$.

By \S \ref{dpsurfaces}, the only possibility is $(X,D)=(\p^2,D)$, where $D$ is a line or a conic in $\p^2$.

But for $X=\p^2$, $H^0(X,\Omega_X(t))=0$ for $t\leq 1$. Hence for $\frac{s-k}{2}=\frac{3-k}{2}\leq 1$, $k=1,2$, the vanishing \eqref{vanishL} holds.
When $t<0$, then clearly $H^0(D,\cO_D(t))=0$.

When $t=0$, then by the hard Lefschetz theorem and the cupping map (for instance see \cite[Lemma 1.2]{Peternell}) gives the injectivity of
\eqref{injR}.

Hence $\Omega_{\p^2}(log\,D)$ is semistable.

b) $\bf{n=3}$ and $\bf{s\leq 4}$.

Since $X$ is a Fano $3$-fold, by \cite[2.4, p.638]{Steffens},  we have the stability of $\Omega_X$. Therefore, by Maruyama's result \cite{Maruyama}, $\Omega^a_X$ is semistable. Using the slope inequality in Lemma \ref{slopelog}, we deduce that 
\begin{equation}\label{strict}
H^{0}(X,\Omega^a_X(t))=0, \m{ for }  t < \dfrac{a. s}{3},
\end{equation}
and when $a=1$, the vanishing holds for $t\leq \frac{s}{3}$.

On the other hand, $\dfrac{a.(s-k)}{3} < \dfrac{a. s}{3}$ and this verifies \eqref{vanishL}.

Now we proceed to check \eqref{injR} below.
 
Since $- K_D = \mathcal O_D(s-k)$ is ample, D is a Del Pezzo surface. But $Pic(D)$ can be greater than $\Z$, hence semistability of $\Omega_D$ does not always hold. Hence we argue as follows.

Since $0<k<s\leq4$ and $1\leq a<3$, the possible values for $a$ are 1, 2 and the possible values for $k$ and $s$ are:

if $k=1$, then $s= 2, 3, 4$.

if $k=2$, then $s= 3, 4$.

if $k=3$, then $s= 4$. 

If $t<0$, then the required vanishing of $H^0(D,\Omega^a_D(t))$, follows from Kodaira-Akizuki-Nakano theorem.
  
Suppose $a=1$ and we have $0 \leq t < \dfrac{(s-k)}{3}$.

If we substitute the respective values of $k$ and $ s $ in above range then the only possible value is $t =0 $.
The required Hodge vanishing holds because both $X$ and $D$ are Fano manifolds.

Supose $a=2$ and we have $0 \leq t < \dfrac{2\cdot(s-k)}{3}$.
In this case the only possible values are $t=0,1$ when $(k, s)= (1, 3)$  and $(2, 4)$.
We deduce that it is sufficient to prove, when $t=1$,
$$
H^{0}(D,\Omega_D(1))=0
$$ 
and when $t=0$,
$$ 
H^0(D,\mathcal O_D) \longrightarrow H^1(X, \Omega_X) 
$$ 
is injective.

Suppose $t=1$.

First consider the case when $(k,s)=(2,4)$. Then by \S \ref{threefolds} a) 4), $(X,D)= (\p^3,D)$, where $D$ is a smooth quadric surface.
Using Lemma \ref{surjective}, we deduce that the restriction map
$$
H^0(\p^3,\Omega_{\p^3}(1))\rar H^0(D,\Omega_{D}(1))
$$
is surjective.

But we noticed in \eqref{strict} or it also follows from \cite{Bott},  that $H^0(\p^3,\Omega_{\p^3}(1))=0$. Hence $H^0(D,\Omega_D(1))=0$.
 
When $(k,s)=(1,3)$, then by \S \ref{threefolds} b)3), $(X,D)=(Q,H)$, where $Q$ is a smooth quadric threefold and $H$ is a hyperplane section. Hence $D$ is again a quadric surface and  $H^0(D,\Omega_D(1))=0$.

On the other hand for $t=0$, by \cite[Lemma 2.1]{Peternell}, the required injectivity follows.

Hence $\Omega_X(\rm{log} D)$ is semi-stable.

c) $\bf{n=4}$ and $\bf{s\leq 5}$.

Since $X$ is a Fano $4$-fold with $Pic(X)=\Z$, by \cite[2.10,p.15]{Peternell}, $\Omega_X$ is stable.
Therefore, by Maruyama's result the exterior powers are semistable and by Lemma \ref{slopelog}, we have
\begin{equation}\label{fourfold}
H^{0}(X,\Omega^a_X(t))=0, \m{ for }  t < \dfrac{a.s}{4}.
\end{equation}

On the other hand, $\dfrac{a.(s-k)}{4} < \dfrac{a\cdot s}{4}$ and we have
$H^{0}(X,\Omega^a_X(t))=0$ for  $ t < \dfrac{a\cdot(s-k)}{4}$.

Since $- K_D = \mathcal O_D(s-k)$ is ample, D is a Fano $3$-fold with $Pic(D)=\Z.H_{|D}$ (by Lefschetz hyperplane section theorem). Hence $\Omega_D$ is stable \cite[2.4,p.638]{Steffens}.

Therefore, again by Maruyama's result we have the semistability of its exterior powers and by Lemma \ref{slopelog},
$$
H^{0}(D,\Omega^{a-1}_D(t))=0, \m{ for } t < \dfrac{(a-1).(s-k)}{3}.
$$

Since $ \dfrac{(a-1).(s-k)}{3} < \dfrac{a.(s-k)}{4} $, we only have to discuss the situation
$$ 
\dfrac{(a-1).(s-k)}{3} \leq t < \dfrac{a.(s-k)}{4}.
$$
Since $0<k<s\leq3$ and $1\leq a<4$, the possible values for $a$ are $1, 2, 3$ and the possible values for $k$ and $s$ are:

if $k=1$, then $s= 2, 3$,

if $k=2$, then $s= 3$.

If we substitute the respective values of $k$, $ s $ and $a$ in $ \dfrac{(a-1).(s-k)}{3} \leq t < \dfrac{a.(s-k)}{4} $
then the only possible value which remains is $t =0$ and when $a=1$.

Therefore, as before injectivity of \eqref{injR} follows from  \cite[Lemma 1.2]{Peternell}.

Suppose $\bf{s=4}$, then we note that we need vanishing of only $H^0(D,\Omega_D^{a-1}(t))$, if $a=2$ and $k=1$.
In this case $X$ is a smooth quadric $4$-fold and $D$ is a smooth quadric threefold in $\p^3$.
Hence we can apply Lemma \ref{surjective}, to get the desired vanishing.

Suppose $\bf{s=5}$, then $X=\p^4$. We note that we need to check vanishing of $H^0(D, \Omega^{a-1}_D(t))$ only when $a=2, \,k=2, t=1$ and when $a=3,\,k=2,\,t=2$.
In this case, $D$ is a smooth quadric threefold.  Both these vanishings follow from \cite[Theorem (1), p.174]{Snow}.  
 
Hence $\Omega_X(\rm{log} D)$ is semi-stable.

d) $\bf{n=5}$ and $\bf{s\leq 6}$.

Since $X$ is a Fano $5$-fold, $\Omega_X$ is stable \cite[Theorem 2,p.605]{Hwang}.
Therefore, by Maruyama's result,
$$
H^{0}(X,\Omega^a_X(t))=0, \m{ for } t < \dfrac{a.s}{5}.
$$
Note that $D$ is a Fano fourfold with $Pic(D)=\Z.H_{|D}$.

If $t\leq 0$ and except when $a=1,t=0$, then by Kodaira-Nakano vanishing theorem and by rational connectedness of $D$, 
$$
 H^0(D,\Omega^{a-1}_D(t))=0.
$$
So when $t\leq 0$, we have the desired vanishing $H^{0}(X,\Omega^a_X(\rm{log} D)(t))=0$.

Suppose $t=0$ and $a= 1$ then by  \cite[Lemma 2.1]{Peternell}
the injectivity of \eqref{injR} follows.

As in the previous case, we need to look at the case:
$$
\frac{(a-1)(s-k)}{4} \leq t < \frac{a.(s-k)}{5}
$$
to obtain vanishing of $H^0(D, \Omega^{a-1}_D(t))$.

We note that we need to check the following cases only: 

1) $( s=3, k=1, a=3, t=1)$,

2) $(s=4,k=1,a=2, t=1)$,
 
3) $(s=4, k=2, a=3,t=1)$
  
4)   $(s=5, k=1,a=2,t=1)$
   
5)   $(s=5,k=1, a=3, t=2)$
   
6)   $(s=5,k= 1,a=4, t=3)$
   
7)   $(s=5,k=2,a=2,t=1)$
   
8)   $(s=5,k=3,a=3, t=1)$
   
9)   $(s=6,k=2, a, t=a-1)$
   
10)   $(s=6,k=3,a=2,t=1)$
   
 11)  $(s=6,k=4,a=3, t=1)$.

We check that in 4),5),6), $D$ is a smooth quadric hypersurface in $\p^5$. Hence the vanishing $H^0(D,\Omega^{a-1}_D(a-1))=0$ holds by \cite[Theorem 1),p. 174]{Snow}.
Similarly, 9) also hold because $D$ is a smooth quadric hypersurface in $\p^6$.   
We again use Snow's theorem and apply Lemma \ref{surjective} to get the required vanishing on $D$, in case of 8), 10), and 11).

The remaining cases are not known to us.   
   
Hence $\Omega_X(\rm{log} D)$ is semi-stable.

d) $\bf{n=6}$ and $\bf{s\leq 7}$.

Since $X$ is a Fano $6$-fold, by \cite[Theorem 3,p.605]{Hwang}, $\Omega_X$ is semi-stable.
Therefore, by Maruyama's result
$$
H^{0}(X,\Omega^a_X(t))=0 \m{ for } t < \dfrac{a. s}{6}.
$$

On the other hand, $\dfrac{a.(s-k)}{6} < \dfrac{a. s}{6}$ we have
$$
H^{0}(X,\Omega^a_X(t))=0, \m{ for }  t < \dfrac{a.(s-k)}{6}.
$$

Since $- K_D = \mathcal O_D(s-k)$ is ample, D is a Fano $5$-fold with $Pic(D)=\Z.H_{|D}$ and hence $\Omega_D$ is stable \cite[Theorem 2,p.605]{Hwang}.
Therefore, again by Maruyama's result we have
$$
H^{0}(D,\Omega^{a-1}_D(t))=0, \m{ for } t < \dfrac{(a-1). (s-k)}{5}.
$$

Since $ \dfrac{(a-1).(s-k)}{5} < \dfrac{a.(s-k)}{6} $, so we have only to discuss the situation
$$ 
\dfrac{(a-1).(s-k)}{5} \leq t < \dfrac{a.(s-k)}{6}.
$$

Since $0<k<s\leq 4$ and $1\leq a<6$, the possible values for $a$ are 1, 2, 3, 4, 5 and the possible values for $k$ and $s$ are:

if $k=1$, then $s= 2, 3, 4$,

if $k=2$, then $s= 3, 4$,

if $k=3$, then $s= 4$.

Suppose $a=1$ we have $0 \leq t < \dfrac{(s-k)}{6}$.

If we substitute the respective values of $k$ and $ s $ in above range then the only possible value is $t =0 $.
In this case it is enough to show $H^0(D,\mathcal O_D) \longrightarrow H^1(X,\Omega^1_X)$ is injective. But this follows from
the cupping map \cite[Lemma 2.1]{Peternell}.

Supose $a=2$ then we have $\dfrac{(s-k)}{5} \leq t < \dfrac{2.(s-k)}{6}$.
In this case $t$ does not exist.

Suppose $a=3, 4$, and if we substitute the respective values of $k$, $s$ and $a$ in
$$
 \dfrac{(a-1).(s-k)}{5} \leq t < \dfrac{a.(s-k)}{6}. 
$$ 
Then the possible value is $t=1$ when

$\bullet$ $a=3$ and $(k, s)=(1, 4)$,

$\bullet$ $a=4$ and $(k, s)=(1, 3), (2, 4)$.

In this case it is enough to show
$$ 
H^0(D,\Omega^2_D(1))=0 \m{ when }(k, s)=(1, 4)
$$ 
and 
$$ 
H^0(D,\Omega^3_D(1))=0 \m{ when }(k, s)=(1, 3), (2, 4).
$$

But both the claims follows from stability of $\Omega_D$.

Supose $a=5$ then we have $\dfrac{4.(s-k)}{5} \leq t < \dfrac{5\cdot(s-k)}{6}$.
If we substitute the respective values of $k$ and $ s $ in above range then the only possible value is $t=2$ when $(k, s)=(1, 4)$.

In this case it is enough to show
$$ 
H^0(D,\Omega^4_D(2))=0 \m{ when }(k, s)=(1, 4).
$$ 
But this follows from stability of $\Omega_D$.

The following cases need only to be discussed:

$\bf{s=6}$:

1) $(s=6,k=1, a,t=a-1)$

2) $(s=6,k=2, a=2, t=1)$

3) $(s=6, k=3,a=2,t=1)$

4) $(s=6,k=4,a=3,t=1)$.

$\bf{s=7}$:

5) $(s=7,k=2,a,t=a-1)$

6) $(s=7,k=3,a=2,t=1)$.

In 1) and 5), we note that $D$ is a smooth quadric hypersurface in $\p^6$ and by \cite[Theorem 1,p.174]{Snow}, the vanishing $H^0(D,\Omega^{a-1}_D(a-1))=0$ holds. 
Again by Snow's theorem, and applying Lemma \ref{surjective} we deduce the required vanishing in case 1)-4), 6).

Hence $\Omega_X(\rm{log} D)$ is semi-stable.
\end{proof} 

\subsection{Counterexample when $D$ is reducible}

We now investigate the converse of above proposition when $(X,D)$ is  Del Pezzo surface.
\begin{lemma}\label{counter}
Suppose $(X,D)=(\p^2,D_1+D_2)$, where $D_1,D_2$ are lines on $\p^2$.
Then $\Omega_{\p^2}(log\, D)$ is not semistable.
\end{lemma}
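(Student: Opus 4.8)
\emph{Proof proposal.} The plan is to exhibit an explicit rank-one subsheaf of $\Omega_{\p^2}(\log D)$ whose slope strictly exceeds $\mu(\Omega_{\p^2}(\log D))$, which is exactly what it means for the rank-two bundle $\Omega_{\p^2}(\log D)$ to fail to be semistable.

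First I would record the slope. Here $n=2$, $K_{\p^2}=\cO_{\p^2}(-3)$ so $s=3$, and $D_1,D_2$ are lines, so $k_1=k_2=1$ and $\sum_i k_i=2$. By Lemma \ref{slopelog} a) with $a=1$,
$$
\mu(\Omega_{\p^2}(\log D))=\frac{-3+2}{2}\cdot\cO_{\p^2}(1)^2=-\frac12 .
$$
Hence it suffices to produce a rank-one subsheaf of non-negative degree. Since $\Omega_{\p^2}(\log D)$ is locally free, hence torsion-free, this amounts to producing a nonzero global section $\omega\in H^0(\p^2,\Omega_{\p^2}(\log D))$: such a section gives an injection $\cO_{\p^2}\hookrightarrow\Omega_{\p^2}(\log D)$ with $\mu(\cO_{\p^2})=0>-\tfrac12$.

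Next I would construct $\omega$. Choose homogeneous coordinates $[x_0:x_1:x_2]$ on $\p^2$ so that $D_1=\{x_1=0\}$ and $D_2=\{x_2=0\}$ (possible since $D_1\neq D_2$), and set
$$
\omega:=\frac{dx_1}{x_1}-\frac{dx_2}{x_2}=d\log(x_1/x_2).
$$
Because $x_1/x_2$ is a rational function on $\p^2$ with divisor $D_1-D_2$, the form $\omega$ is a closed rational $1$-form on $\p^2$ whose polar divisor is exactly $D_1+D_2$, with simple poles (residue $+1$ along $D_1$, $-1$ along $D_2$). Concretely, in the affine chart $\{x_0\neq0\}$ with coordinates $u=x_1/x_0$, $v=x_2/x_0$, one computes $\omega=\tfrac{du}{u}-\tfrac{dv}{v}$, a logarithmic form along $\{uv=0\}=D\cap\{x_0\neq0\}$; in the chart $\{x_1\neq0\}$, writing $q=x_2/x_1$, one gets $\omega=-\tfrac{dq}{q}$, logarithmic along $D_2$ (and $D_1$ does not meet this chart); symmetrically on $\{x_2\neq0\}$. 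Thus $\omega$ is a nonzero element of $H^0(\p^2,\Omega_{\p^2}(\log D))$. Alternatively, this can be read off from the residue sequence $0\to\Omega^1_{\p^2}\to\Omega_{\p^2}(\log D)\to\cO_{D_1}\oplus\cO_{D_2}\to0$: since $H^0(\Omega^1_{\p^2})=0$ and the connecting map $H^0(\cO_{D_1}\oplus\cO_{D_2})=\C^2\to H^1(\Omega^1_{\p^2})=\C$ is the cycle-class map, sending each generator to the hyperplane class, its kernel is one-dimensional, so $h^0(\Omega_{\p^2}(\log D))=1$.

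Finally, this nonzero section gives a rank-one subsheaf $\cO_{\p^2}\hookrightarrow\Omega_{\p^2}(\log D)$ of slope $0$, which is strictly larger than $\mu(\Omega_{\p^2}(\log D))=-\tfrac12$, contradicting the semistability inequality; hence $\Omega_{\p^2}(\log D)$ is not semistable. The only point that needs care is verifying that $\omega$ picks up no extra pole along the "coordinate line at infinity" in each affine chart; this is precisely why one uses the difference $\tfrac{dx_1}{x_1}-\tfrac{dx_2}{x_2}=d\log(x_1/x_2)$ rather than a single logarithmic differential, since the offending $\tfrac{dx_0}{x_0}$-contributions then cancel.
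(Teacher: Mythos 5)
Your proposal is correct and amounts to the same argument as the paper's: the paper deduces $H^0(\p^2,\Omega_{\p^2}(\log D))\neq 0$ from the failure of injectivity of the connecting map $H^0(\cO_{D_1})\oplus H^0(\cO_{D_2})=\C^2\to H^1(\p^2,\Omega_{\p^2})=\C$ in the residue sequence, which is exactly your "alternative" dimension count, and this nonzero section is the destabilizing $\cO_{\p^2}\hookrightarrow\Omega_{\p^2}(\log D)$ of slope $0>-\tfrac12$. Your explicit form $d\log(x_1/x_2)$ is a nice concrete realization of that section but is not needed beyond the cohomological count.
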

\begin{proof}
The semistability of $\Omega_{\p^2}(log\,D)$ is equivalent to the vanishing (see \eqref{vanishes}):
$$
H^0(\p^2,\Omega_{\p^2}(log\,D)(t))\,=\,0
$$
for $t< \frac{(3-2)}{2}$, i.e. when $t\leq 0$.

When $t<0$, this follows from Lemma \ref{vanish}.

When $t=0$, we note that the injectivity of map
$$
\bigoplus_{i=1,2}H^0(D_i,\cO_{D_i})\rar H^1(\p^2,\Omega_{\p^2})
$$
(see \eqref{singleres}) fails. Indeed, here $\bigoplus_{i=1,2}H^0(D_i,\cO_{D_i})$ is of rank two and $H^1(\p^2,\Omega_{\p^2})$ is of rank one.
\end{proof}

\begin{remark}
We suspect that in higher dimensional cases with several divisor components, the semistability may fail. We hope to look at them in a future work.
\end{remark}

%%%%%%%%%%%%%%%%%%%%%%%%%%%%%%%%%%%%%%%%%%%%%%%%%%%%%%%%%%%%%%%%%%%%%%%%%%%%%%%%%%%%%%%%%%%%%%%%%%%%%%%%%%%%%%%%%%%%%%%%%%%%%%%

\end{document}